\NeedsTeXFormat{LaTeX2e}

\documentclass[11pt, a4paper]{amsart}

\usepackage{amsmath, amsthm, amssymb}
\theoremstyle{plain}
\newtheorem{theorem}{Theorem}[section]

\newtheorem{proposition}[theorem]{Proposition}
\newtheorem{mainth}{Theorem}
\newtheorem{maincoro}{Corollary}
\theoremstyle{definition}

\newtheorem*{acknowledgement}{Acknowledgement}
\theoremstyle{remark}

\newcommand{\sD}{\mathsf{D}}
\newcommand{\sP}{\mathsf{P}}
\newcommand{\sA}{\mathsf{A}}
\newcommand{\bA}{\mathbb{A}}
\newcommand{\cO}{\mathcal{O}}
\newcommand{\can}{\operatorname{can}}
\newcommand{\cS}{\mathcal{S}}
\newcommand{\diam}{\operatorname{diam}}
\newcommand{\bP}{\mathbb{P}}
\newcommand{\sF}{\mathsf{F}}
\newcommand{\bR}{\mathbb{R}}
\newcommand{\bN}{\mathbb{N}}
\newcommand{\sH}{\mathsf{H}}
\newcommand{\cM}{\mathcal{M}}
\newcommand{\Vu}{\boldsymbol{u}}
\newcommand{\Vv}{\boldsymbol{v}}

\numberwithin{equation}{section}

\begin{document} 

\title[Lehto--Virtanen-type and big Picard-type theorems]{Lehto--Virtanen-type and big Picard-type theorems for Berkovich analytic spaces}
\author[Y\^usuke Okuyama]{Y\^usuke Okuyama}
\address{
Division of Mathematics,
Kyoto Institute of Technology,
Sakyo-ku, Kyoto 606-8585 Japan.}
\email{okuyama@kit.ac.jp}

\date{\today}

\begin{abstract} 
In non-archimedean setting,
we establish a Lehto--Virtanen-type theorem for 
a morphism from the punctured Berkovich closed unit disk $\overline{\sD}\setminus\{0\}$
in the Berkovich affine line to the Berkovich projective line $\sP^1$
having an isolated essential singularity at the origin, and then
establish a big Picard-type theorem for such an open subset $\Omega$ in 
the Berkovich projective space $\sP^N$ of any dimension $N$
that the family of all morphisms from $\overline{\sD}\setminus\{0\}$
to $\Omega$ is normal in a non-archimedean Montel's sense.
As an application of the latter theorem, we see a 
big Brody-type hyperbolicity of the Berkovich harmonic 
Fatou set of an endomorphism of $\sP^N$ of degree $>1$.
\end{abstract}

\subjclass[2010]{Primary 32P05; Secondary 37P40, 37P50}
\keywords{Berkovich analytic space,
isolated essential singularity, Lehto--Virtanen-type theorem,
big Picard-type theorem, big Brody-type hyperbolicity, Berkovich harmonic Fatou set} 

\maketitle

\section{Introduction}\label{sec:intro}

Let $K$ be a field of any characteristic
that is complete with respect to a {\em non-archimedean} 
absolute value $|\cdot|$.
Let $\overline{\sD}$ be the Berkovich closed unit disk 
in the Berkovich affine line $\sA^1=\sA^1(K)=\bA^1(K)^{\operatorname{an}}$. 
We note that 
\begin{gather*}
 \overline{\sD}\cap K=\cO_K=\{z\in K:|z|\le 1\},
\end{gather*}
and that $\overline{\sD}=\sP^1\setminus U(\overrightarrow{\cS_{\can}\infty})$
in the notation in \S\ref{sec:berkovich}.
We say a morphism $f$ from $\overline{\sD}\setminus\{0\}$
to a Berkovich $K$-analytic space $X$ {\em has an isolated
essential singularity at the origin} if $f$ does not extend to 
a morphism from $\overline{\sD}$ to any Berkovich $K$-analytic space.
One of our aims is to see the following
non-archimedean analog of Lehto--Virtanen and Lehto 
\cite{LV57Fennicae,Lehto59}
(see also \cite{Okuhyperbolic,OPrescaling}).

\begin{mainth}[a Lehto--Virtanen-type theorem]\label{th:LV}
Let $K$ be a field of any characteristic
that is complete with respect to a non-archimedean 
absolute value $|\cdot|$. Then for every morphism
$f$ from $\overline{\sD}\setminus\{0\}$ to the Berkovich projective line
$\sP^1=\sP^1(K)$ 
having an isolated essential singularity at the origin, we have
\begin{gather}
\limsup_{r\searrow 0}\diam_\#\bigl(f(\{z\in K:|z|=r\})\bigr)=\diam_\#(\bP^1).\label{eq:LV}
\end{gather}
Here, $\diam_\#$ is the chordal diameter function
on $(2^{\bP^1})\setminus\{\emptyset\}$ with respect 
to an equipped chordal metric on $\bP^1=\bP^1(K)$.
\end{mainth}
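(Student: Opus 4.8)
The plan is to argue by contraposition: assuming that
\[\limsup_{r\searrow 0}\diam_\#\bigl(f(\{z\in K:|z|=r\})\bigr)<\diam_\#(\bP^1)=:D,\]
I will show that $f$ extends to a morphism from $\overline{\sD}$ to $\sP^1$, so the singularity at $0$ is not essential. Fix $\delta>0$ and $r_0\in(0,1]$ with $\diam_\#\bigl(f(C_r)\bigr)\le D-\delta$ for every $r\in|K^*|$ with $r<r_0$, writing $C_r:=\{z\in K:|z|=r\}$. I shall repeatedly use the elementary fact that if $f$ has no zero or pole of absolute value $r$ then $|f(z)|=|f(\zeta_{0,r})|$ for every $z\in C_r$, and in general this holds for every $z\in C_r$ lying outside the finitely many residue discs that contain a zero or a pole of $f$ of absolute value $r$.

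The core of the proof is the assertion that \emph{$f$ omits at least two points of $\bP^1$ on some punctured disk $\{0<|T|_x<r_1\}$}. Granting it, I argue as follows: two of the omitted points can be carried to $0$ and $\infty$ by a chordal isometry $\gamma\in\mathrm{PGL}_2(\cO_K)$, which preserves the standing hypothesis; then $\gamma\circ f$ maps $\{0<|T|_x<r_1\}$ into $\mathbb{G}_m^{\operatorname{an}}$, where it is an invertible analytic function, hence of the shape $a\,T^{m_0}(1+h)$ with $a\in K^*$, $m_0\in\mathbb{Z}$ and $\|h\|_{\sup}<1$. Such a map extends to a morphism $\overline{\sD}\to\sP^1$, holomorphically at $0$ when $m_0\ge0$ and with a pole of order $-m_0$ when $m_0<0$; undoing $\gamma$, the map $f$ extends, contradicting the isolated essential singularity.

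To prove the assertion, suppose $f$ omits at most one point of $\bP^1$ on every punctured disk at $0$. Replacing $f$ by $1/f$ if necessary --- again a chordal isometry --- we may assume $f$ does not omit the value $0$, so $f$ has zeros $z_k$ with $0<|z_k|\to0$. Take $k$ with $|z_k|<r_0$; if moreover $f$ has no pole of absolute value $|z_k|$ (the remaining case being handled by a small perturbation of the radius), then $f(C_{|z_k|})$ contains $0=f(z_k)$ and, by the fact above together with a count of residue discs, a point $w$ with $|w|=|f(\zeta_{0,|z_k|})|=:M_k$, whence
\[\diam_\#\bigl(f(C_{|z_k|})\bigr)\ \ge\ \|0,w\|_\#\ =\ M_k/\max(M_k,1),\]
which equals $D$ as soon as $M_k\ge 1$; hence $M_k<1$ for all large $k$. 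Now split into cases. If $f$ omits $\infty$ on some punctured disk at $0$, then $f$ is analytic there and --- an essential singularity being neither removable nor a pole --- its Laurent expansion has infinitely many non-zero coefficients of negative index, so $|f(\zeta_{0,s})|\to\infty$ as $s\searrow0$, contradicting $M_k<1$. Otherwise $f$ has poles $p_l$ with $0<|p_l|\to0$; arguing at pole-radii gives $|f(\zeta_{0,|p_l|})|>1$ for all large $l$, so by continuity of $s\mapsto|f(\zeta_{0,s})|$ there are radii $\tilde r$, arbitrarily small, carrying no zero or pole of $f$ and with $|f(\zeta_{0,\tilde r})|=1$; on such a Berkovich circle $f=a\,T^{m_0}(1+h)$ with $|h|<1$ and $m_0\ne0$ (since $|f(\zeta_{0,s})|$ is non-constant near $\tilde r$), so $f(C_{\tilde r})$ meets at least two residue classes of $\{|w|=1\}$ and $\diam_\#\bigl(f(C_{\tilde r})\bigr)=D$. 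Either case contradicts the standing hypothesis, proving the assertion.

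I expect the main obstacle to lie in two places: extracting from the purely qualitative hypothesis ``isolated essential singularity at $0$'' the quantitative input that $|f(\zeta_{0,s})|$ is unbounded as $s\searrow0$ whenever $f$ avoids a neighbourhood of $\infty$ near $0$; and, more delicately, the residue-disc bookkeeping needed to realise the chordal modulus $|f(\zeta_{0,r})|$ at a $K$-rational point of $C_r$ and to spread $f(C_{\tilde r})$ over two residue classes --- this is where borderline cases (small $\widetilde K$, wild residue characteristic, discretely valued ground field) force a more careful argument, and is the technical heart of the matter.
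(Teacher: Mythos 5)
Your proposal takes a genuinely different route from the paper's. The paper never discusses omitted values: it shows that failure of \eqref{eq:LV}, combined with the diameter formula \eqref{eq:diameter} and the mapping property \eqref{eq:mapping} of tangent maps, forces $f$ to send a whole small punctured disk into a single directional ball $U(\Vu_0)$ at $\cS_{\can}$, and then invokes Proposition \ref{th:Riemann}; no Newton-polygon or residue-class bookkeeping is needed. Your plan (contrapositive, an omitted-values dichotomy via zero- and pole-radii, the structure $aT^{m_0}(1+h)$ of units, then Riemann-type extension) is classical in flavour and can be pushed through over an algebraically closed, nontrivially valued $K$, but as written it has two genuine defects.

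First, the assertion that ``two of the omitted points can be carried to $0$ and $\infty$ by a chordal isometry $\gamma\in\mathrm{PGL}_2(\cO_K)$'' is false: such a $\gamma$ preserves the chordal metric and $[0,\infty]_{\bP^1}=1$, so no isometry exists when the two omitted points lie at chordal distance $<1$ (e.g.\ both in one residue disk). This is harmless only because the standing diameter hypothesis is never used again after the core assertion, so an arbitrary element of $\mathrm{PGL}_2(K)$ suffices there --- but that is a different argument from the one you wrote. Second, and more seriously, the points you yourself defer as ``the technical heart'' are exactly where the proof breaks for general $K$, and they are not minor: realizing the modulus $|f(\zeta_{0,r})|$ at a $K$-rational point of $C_r$ requires the circle to have more residue classes than the number of bad ones (false when the residue field is finite); the crossing radius $\tilde r$ with $|f(\zeta_{0,\tilde r})|=1$ need not lie in $|K^*|$ when the value group is discrete, in which case $C_{\tilde r}\cap K=\emptyset$ and no contradiction with the hypothesis is available at that radius; and when the residue field is small the map $u\mapsto u^{m_0}$ can be constant on the available residue classes (e.g.\ over $\mathbb{Q}_p$ with $(p-1)\mid m_0$), so $f(C_{\tilde r})$ need not meet two residue classes at all. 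The paper removes all of these obstacles at the start of Section \ref{sec:LV} by assuming $K$ algebraically closed and nontrivially valued; your write-up neither makes nor justifies such a reduction, so the argument is incomplete precisely in the cases you list. With that reduction in place the remaining loose ends close easily: if the circle of a zero-radius also carries a pole, or if the crossing radius itself carries a zero or pole, then $f$ of that circle already contains a point of modulus $1$ (or the generic modulus) together with $0$ or a point of huge modulus, so its chordal diameter is $1$ outright --- no ``small perturbation of the radius'' is needed, and indeed perturbing would lose the zero you rely on.
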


In the proof of Theorem \ref{th:LV}, 
we will normalize the equipped chordal metric on $\bP^1$ as
$\diam_\#(\bP^1)=1$.
The proof of Theorem \ref{th:LV} 
is an improvement of some
argument in the proof of Rodr{\'\i}guez V{\'a}zquez
\cite[Proposition 7.17]{Rodriguez16}, which was a little
Picard-type theorem.
An argument similar to that in the
proof of Theorem \ref{th:LV} also yields the following big version of
\cite[Proposition 7.17]{Rodriguez16}.
For the definition of the non-archimedean Montel-type normality
appearing in Theorem \ref{th:big}, 
see \cite[\S 1, \S 7]{Rodriguez16} and \cite[Introduction]{FKT12}.

\begin{mainth}[a big Picard-type theorem]\label{th:big}
Let $K$ be a field of any characteristic
that is complete with respect to a non-archimedean 
absolute value, and 
let $\Omega$ be an open subset in the Berkovich projective space $\sP^N=\sP^N(K)$
of any dimension $N$
such that the family $\operatorname{Mor}(\overline{\sD}\setminus\{0\},\Omega)$
of all morphisms from $\overline{\sD}\setminus\{0\}$ to $\Omega$ is normal.
Then any morphism from $\overline{\sD}\setminus\{0\}$ to $\Omega$ extends
to a morphism $\overline{\sD}\to\sP^N$.
\end{mainth}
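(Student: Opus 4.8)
The plan is to argue by contradiction, by a renormalization argument in the spirit of the proof of Theorem \ref{th:LV}. Assume some $f\in\operatorname{Mor}(\overline{\sD}\setminus\{0\},\Omega)$ has an isolated essential singularity at the origin; the goal is to contradict the normality of $\mathcal{F}:=\operatorname{Mor}(\overline{\sD}\setminus\{0\},\Omega)$. Let $\Sigma$ be the skeleton of $\overline{\sD}\setminus\{0\}$, the half-open segment joining the removed point $0$ to the Gauss point of $\overline{\sD}$; for $\rho\in(0,1]$ write $\xi_\rho$ for the Gauss point of the disk $\{z\in\sA^1:|z|\le\rho\}$, so $\rho\mapsto\xi_\rho$ parametrizes $\Sigma$. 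The basic device is rescaling at the origin: for $a\in K$ with $0<|a|<1$ the multiplication $m_a\colon z\mapsto az$ restricts to an isomorphism of $\overline{\sD}\setminus\{0\}$ onto $\{z:0<|z|\le|a|\}\subseteq\overline{\sD}\setminus\{0\}$ taking $\xi_\rho$ to $\xi_{|a|\rho}$, whence $f\circ m_a\in\mathcal{F}$; as $|a|\searrow 0$ these ``zoomed-in copies'' of $f$ probe its germ at the origin. (If $|\cdot|$ is trivial there is no such $a$; but then $\overline{\sD}\setminus\{0\}$ is topologically trivial near the origin and the statement is immediate, so assume $|K^\times|\ne\{1\}$ and take all radii in $|K^\times|$.) It is enough to prove that $f(z)$ has a limit $p\in\sP^N$ as $z\to 0$: granting this, one extends $f$ to $\overline{\sD}\to\sP^N$ by $f(0):=p$; continuity forces $f$ into an affine chart $\sA^N\subseteq\sP^N$ about $p$ over a punctured disk $\{0<|z|<\rho_0\}$, where its coordinate functions are bounded analytic, hence --- by the non-archimedean removable singularity theorem (a bounded analytic function on a punctured disk has no negative Laurent coefficients) --- extend across $0$; so $f$ extends to a morphism $\overline{\sD}\to\sA^N\subseteq\sP^N$, contradicting the essential singularity.

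The crux is to show that $f(\xi_\rho)$ has a limit $p\in\sP^N$ as $\rho\searrow 0$, that is, that the path $\rho\mapsto f(\xi_\rho)$ along the skeleton does not oscillate. Granting this, choose a sequence $t_n\searrow 0$ in $|K^\times|$ with bounded consecutive ratios $t_n/t_{n+1}\le M$ (possible since $|K^\times|\ne\{1\}$), pick $a_n\in K^\times$ with $|a_n|=t_n$, and set $f_n:=f\circ m_{a_n}\in\mathcal{F}$; then for each fixed $s\in(0,1]$ one has $f_n(\xi_s)=f(\xi_{t_n s})\to p$, so $f_n\to p$ pointwise on $\Sigma$. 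By normality, every subsequential locally uniform limit $g\in\operatorname{Mor}(\overline{\sD}\setminus\{0\},\sP^N)$ of $(f_n)$ has $g|_\Sigma\equiv p$, hence is the constant $p$: otherwise $g^{-1}(p)$ would be a discrete analytic subset of $\overline{\sD}\setminus\{0\}$ containing the non-discrete connected set $\Sigma$. Therefore $f_n\to p$ locally uniformly; evaluating this on a fixed compact closed sub-annulus $A\subset\overline{\sD}\setminus\{0\}$ whose rescaled copies $m_{a_n}(A)$ overlap for consecutive $n$ --- possible since $t_n/t_{n+1}\le M$ --- and hence exhaust a punctured neighbourhood of $0$, we get that $f$ is uniformly close to $p$ on a punctured neighbourhood of $0$; that is, $f(z)\to p$ as $z\to 0$, as required.

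To prove the crux, suppose $f(\xi_\rho)$ does not converge as $\rho\searrow 0$; then by compactness of $\sP^N$ there are radius-sequences $\sigma_n\searrow 0$ and $\sigma'_n\searrow 0$, which we may interlace, with $f(\xi_{\sigma_n})\to p$ and $f(\xi_{\sigma'_n})\to q$ for some $p\ne q$. Renormalizing $f$ at the origin along $\sigma_n$, along $\sigma'_n$, and along suitable ``transition'' radii in between (produced from the continuity of $\rho\mapsto f(\xi_\rho)$ and the connectedness of closed intervals) produces sequences in $\mathcal{F}$; by normality these have subsequential locally uniform limits, which are morphisms $\overline{\sD}\setminus\{0\}\to\sP^N$, and a contradiction is reached by comparing their values at the Gauss point against the constancy forced by the skeleton argument of the previous paragraph --- this is the analogue, for families, of the renormalization argument proving Theorem \ref{th:LV}. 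The main obstacle is precisely this last step: the renormalization must be set up so that the zoomed-in limits actually detect the oscillation --- i.e.\ so that the radius-sequences used remain comparable after rescaling --- and it is here that the full strength of the non-archimedean Montel-type normality, rather than mere sequential compactness of $\sP^N$, is required. (A minor technical point: $\sP^N$ need not be metrizable, so the ``convergent subsequences'' above mean locally uniform convergence of morphisms, as in \cite{Rodriguez16}, and continuity of $\diam_\#$ is to be used only on the metrizable compacta that arise.)
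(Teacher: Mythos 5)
There are two genuine gaps. First, the heart of the matter --- your ``crux'' that $f(\xi_\rho)$ converges as $\rho\searrow0$ --- is not proven: you describe renormalizing along $\sigma_n$, $\sigma_n'$ and ``transition radii'' and then say the rescaling ``must be set up so that the zoomed-in limits actually detect the oscillation,'' conceding that this is the main obstacle. That is precisely the step the whole theorem turns on, and the plain rescalings $f\circ m_{a_n}$ you use cannot close it (see below). The paper closes it by a different device: after reducing to $N=1$ and splitting into cases according to how the essential singularity manifests (a type II point $\cS_0$ with $\#(f^{-1}(\cS_0)\cap(0,\cS_{\can}])=\infty$; directions $\Vv_n$ at points $\cS_n\to0$ with $f(U(\Vv_n))=\sP^1$; or, failing both, an application of \eqref{eq:mapping} and Proposition \ref{th:Riemann} which either extends $f$ or produces directions with $f(U(\Vv_n))\supset\sP^1\setminus U(\Vu_0)$), it uses the \emph{branched} rescalings $g_n(z)=f(b_{n!}\cdot z^{n!})$. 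Because of the power map, $g_n(\zeta_m)=f(b_{n!})=a_0$ at every $m$-th root of unity once $m\mid n!$, and the roots of unity accumulate at $\cS_{\can}$; so any continuous pointwise limit $g$ must satisfy $g(\cS_{\can})=a_0$, while also $g(\cS_{\can})=\lim_n f(\cS_{n!})$, which by construction lies away from $a_0$ (e.g.\ equals $\cS_0\in\sH^1_{\mathrm{II}}$ while $a_0\in\bP^1$). This yields the contradiction using only \emph{continuity} of the limit.

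Second, and relatedly, you use normality as if subsequential limits were again morphisms, invoking an identity principle (``otherwise $g^{-1}(p)$ would be a discrete analytic subset containing $\Sigma$''). The normality of \cite{FKT12} and \cite{Rodriguez16} used in the statement only provides a subsequence converging pointwise to a \emph{continuous} map $\overline{\sD}\setminus\{0\}\to\sP^N$; in the non-archimedean setting such limits are typically not analytic (their values can be type II points, as in the paper's case (i)), so neither the discreteness of fibers nor the step ``$g$ constant on the skeleton $\Rightarrow$ $g$ constant'' is available. Your outer reduction (once $\lim_{z\to0}f(z)=p$ exists, extend via an affine chart and the Laurent-series removable-singularity argument, as in Proposition \ref{th:Riemann}) is fine, and the remark about non-metrizability of $\sP^N$ is a further reason the extraction of $p\neq q$ by ``sequential compactness'' needs care; but as written the proposal leaves the essential contradiction unestablished and rests the parts it does argue on a stronger form of normality than the theorem assumes.
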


By Rodr{\'\i}guez V{\'a}zquez \cite[Theorem C]{Rodriguez16},
an example of such an $\Omega$ as in Theorem \ref{th:big}
is a component of the (Berkovich) {\em harmonic Fatou set} 
$\sF_{\operatorname{harm}}(f)$ of an endomorphism $f$ of $\sP^N$ 
of degree $>1$; see \cite[Definition 7.9]{Rodriguez16}
for the definition of the (Berkovich) harmonic Fatou set 
of $f$. Hence we conclude the following.

\begin{maincoro}[a big Brody-type hyperbolicity of the harmonic Fatou set]
Let $K$ be a field of any characteristic
that is complete with respect to a non-archimedean 
absolute value, and let $f$ be an endomorphism of $\sP^N=\sP^N(K)$ 
of any dimension $N$ of degree $>1$. Then
any morphism from $\overline{\sD}\setminus\{0\}$ to the $($Berkovich$)$ 
harmonic Fatou set $\sF_{\operatorname{harm}}(f)$
of $f$ extends to a morphism $\overline{\sD}\to\sP^N$.
\end{maincoro}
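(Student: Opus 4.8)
The plan is to obtain the corollary as a formal consequence of Theorem~\ref{th:big} after reducing to a single connected component of the harmonic Fatou set. Fix an endomorphism $f$ of $\sP^N$ of degree $>1$ and a morphism $g\colon\overline{\sD}\setminus\{0\}\to\sF_{\operatorname{harm}}(f)$; we must show that $g$ extends to a morphism $\overline{\sD}\to\sP^N$.

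The first step is to record two topological facts. On the target side, the harmonic Fatou set $\sF_{\operatorname{harm}}(f)$ is open in $\sP^N$, and since $\sP^N$ is locally connected, each connected component $\Omega$ of $\sF_{\operatorname{harm}}(f)$ is again an open subset of $\sP^N$. On the source side, $\overline{\sD}\setminus\{0\}$ is connected: $\overline{\sD}$ is an $\bR$-tree, the classical (type-$1$) point $0$ is an end of that tree, and deleting an end of a tree leaves a connected set. The second step is to note that, $g(\overline{\sD}\setminus\{0\})$ being connected, it is contained in a single component $\Omega$ of $\sF_{\operatorname{harm}}(f)$, so that $g$ is in fact a morphism $\overline{\sD}\setminus\{0\}\to\Omega$.

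The third step is to invoke \cite[Theorem C]{Rodriguez16}: as recalled just before this corollary, it guarantees that such an $\Omega$ is of the type appearing in Theorem~\ref{th:big}, i.e.\ that the family $\operatorname{Mor}(\overline{\sD}\setminus\{0\},\Omega)$ is normal in the relevant non-archimedean Montel sense. Applying Theorem~\ref{th:big} to this open $\Omega$ then gives precisely that $g$ extends to a morphism $\overline{\sD}\to\sP^N$, completing the proof.

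I do not anticipate a real obstacle here: the corollary is essentially a repackaging of Theorem~\ref{th:big} and \cite[Theorem C]{Rodriguez16}. The only points that merit an explicit line are the connectedness of $\overline{\sD}\setminus\{0\}$ and the resulting factorization of $g$ through a single component (both routine), together with checking that the normality furnished by \cite[Theorem C]{Rodriguez16} is verbatim the hypothesis of Theorem~\ref{th:big}; all the analytic content has already been used up in the proof of Theorem~\ref{th:big}.
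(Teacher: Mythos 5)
Your proposal is correct and follows the same route as the paper: the corollary is deduced by combining Theorem \ref{th:big} with \cite[Theorem C]{Rodriguez16}, which supplies the normality of $\operatorname{Mor}(\overline{\sD}\setminus\{0\},\Omega)$ for a component $\Omega$ of $\sF_{\operatorname{harm}}(f)$. Your extra remarks on the connectedness of $\overline{\sD}\setminus\{0\}$ and the factorization through a single component merely make explicit a reduction the paper leaves implicit.
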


The proof of \cite[Proposition 7.17]{Rodriguez16} 
invoked the 
non-archimedean {\em little} Picard theorem,
which asserts that {\em any $K$-analytic mapping $f:\bA^1\to\bP^1$ 
satisfying $\#(\bP^1\setminus f(\bA^1))\ge 2$ is constant} (see, e.g., 
\cite[(1.3) Proposition]{vdP81}).
Our argument in the proofs of Theorems \ref{th:LV} and \ref{th:big}
instead requires a Riemann-type extension theorem
(near an isolated singularity) below, which is almost
straightforward from the Laurent expansion of a $K$-analytic
function around an isolated singularity of it
and the strong triangle inequality,
and which can also be adopted to give a more elementary proof of \cite[Proposition 7.17]{Rodriguez16}.

\begin{proposition}[a Riemann-type extension theorem]\label{th:Riemann}
Let $f$ be a $K$-analytic function on $\cO_K\setminus\{0\}$.
If $|f|$ is bounded near $0$, then $f$ extends to a $K$-analytic function
on $\cO_K$. In particular, $f$ extends to a morphism $\overline{\sD}\to\sP^1$.
\end{proposition}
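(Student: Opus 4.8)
The plan is to read off the claim from the Laurent expansion of $f$ around the origin. As a Berkovich space, $\cO_K\setminus\{0\}$ (that is, $\overline{\sD}\setminus\{0\}$) is the increasing union of the $K$-affinoid annuli $\mathcal{A}_\epsilon=\{x\in\overline{\sD}:\epsilon\le|z(x)|\le 1\}$ for $0<\epsilon<1$, since a neighborhood basis of the type I point $0$ in $\overline{\sD}$ is given by the open disks $\{|z|<\epsilon\}$. On each $\mathcal{A}_\epsilon$ the $K$-analytic function $f$ is represented by a Laurent series $\sum_{n\in\mathbb{Z}}a_n z^n$ with $|a_n|s^n\to 0$ as $|n|\to\infty$ for every $s\in[\epsilon,1]$, and by uniqueness of such expansions these series agree for varying $\epsilon$, so $f$ is given by a single Laurent series $\sum_{n\in\mathbb{Z}}a_n z^n$ converging on the whole punctured closed disk $\{0<|z|\le 1\}$. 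For each $r\in(0,1]$, evaluating $|f|$ at the Gauss point $\zeta_{0,r}$ of the circle $\{|z|=r\}$ yields $|f(\zeta_{0,r})|=\max_{n\in\mathbb{Z}}|a_n|r^n$; this identity is the point where the strong triangle inequality (more precisely, the multiplicativity of the Gauss seminorm) enters.

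Next I would bring in the hypothesis. Interpreting ``$|f|$ is bounded near $0$'' as the existence of $\rho\in(0,1]$ and $M>0$ with $|f(x)|\le M$ for every $x\in\overline{\sD}\setminus\{0\}$ satisfying $|z(x)|\le\rho$ --- equivalently $\limsup_{r\searrow 0}|f(\zeta_{0,r})|<\infty$ --- the previous paragraph gives $\max_{n\in\mathbb{Z}}|a_n|r^n\le M$ for all $r\in(0,\rho]$. Fixing any $n<0$ then yields $|a_n|r^n\le M$, hence $|a_n|\le M r^{-n}=M r^{|n|}$, and letting $r\searrow 0$ forces $a_n=0$. Therefore $f(z)=\sum_{n\ge 0}a_n z^n$ is an ordinary power series; taking $s=1$ in the convergence condition above gives $|a_n|\to 0$, which is exactly the condition for $\sum_{n\ge 0}a_n z^n$ to converge on the entire closed unit disk and to define a $K$-analytic function on $\cO_K$ (i.e. on $\overline{\sD}$) restricting to $f$ on $\cO_K\setminus\{0\}$. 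This proves the first assertion. Since a $K$-analytic function on $\overline{\sD}$ is the same datum as a morphism $\overline{\sD}\to\sA^1$, composing with the open immersion $\sA^1\hookrightarrow\sP^1$ produces the morphism $\overline{\sD}\to\sP^1$, giving the final assertion.

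The only step with any content is the identification of $|f(\zeta_{0,r})|$ with $\max_{n\in\mathbb{Z}}|a_n|r^n$, together with the well-definedness of the coefficients $a_n$ independently of $\epsilon$; both are standard facts about $K$-analytic functions on annuli and are precisely what is meant by ``almost straightforward from the Laurent expansion and the strong triangle inequality.'' Granting these, the whole extension reduces to the single elementary estimate $|a_n|\le M r^{|n|}\to 0$ as $r\searrow 0$ for each $n<0$, so I do not anticipate any real obstacle.
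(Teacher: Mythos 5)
Your proof is correct and is exactly the argument the paper has in mind: it only sketches the proposition as ``almost straightforward from the Laurent expansion and the strong triangle inequality,'' and your write-up fleshes out precisely that route (Laurent series on the annuli, $[f]_{\cS(r)}=\max_n|a_n|r^n$, boundedness killing the negative-index coefficients, and $|a_n|\to 0$ giving convergence on all of $\cO_K$). No gaps to report.
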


\section{Background}
\label{sec:background}
Let $K$ be a field of any characteristic
that is complete with respect to a {\em non-archimedean} 
absolute value $|\cdot|$. Recall that the absolute value 
$|\cdot|$ is said to be non-archimedean if 
the {\em strong} triangle inequality
$|z+w|\le\max\{|z|,|w|\}$ holds for any $z,w\in K$.
Let $\pi=\pi_N:K^{N+1}\setminus\{(0,\ldots,0)\}\to\bP^N=\bP^N(K)$ 
be the canonical projection associated to $\bP^N$ of any dimension $N$, and
let $\|\cdot\|=\|\cdot\|_\ell$
be the {\itshape maximal} norm 
$\|(z_1,\ldots,z_\ell)\|=\max\{|z_1|,\ldots,|z_\ell|\}$ on $K^\ell$
of any dimension $\ell$.
Noting that $\bigwedge^2K^{N+1}\cong K^{\binom{N+1}{2}}$ as $K$-linear spaces
(cf.\ \cite[\S 8.1]{Kobayashi98}),
the (normalized) {\em chordal metric} on $\bP^N$ is defined as
\begin{gather*}
 [z,w]_{\bP^N}:=
 \frac{\|Z\wedge W\|_{\binom{N+1}{2}}}{\|Z\|_{N+1}\cdot\|W\|_{N+1}},\quad z,w\in\bP^N,
\end{gather*}
where $Z\in\pi^{-1}(z),W\in\pi^{-1}(w)$
(the notation is adopted from
Nevanlinna's and Tsuji's books \cite{Nevan70,Tsuji59}),
so that $\diam_\#(\bP^N)=1$. We equip $\bP^1=K\cup\{\infty\}$ with this normalized $[z,w]_{\bP^1}$ in this section. The topology of $\bP^1$
coincides with the metric topology of $(\bP^1,[z,w]_{\bP^1})$.

\subsection{Berkovich projective line as a tree}\label{sec:berkovich}
For the details on $\sP^1$, see \cite{BR10,FR09}.
For simplicity, we also assume that 
$K$ is algebraically closed and $|\cdot|$ is non-trivial.
As a set, the Berkovich affine line $\sA^1=\sA^1(K)$ is the set of all
multiplicative seminorms on $K[z]$ extending $|\cdot|$.
An element of $\sA^1$ is denoted by $\cS$, and
also by $[\cdot]_{\cS}$ as a multiplicative
seminorm on $K[z]$. The topology of $\sA^1$ is
the weakest topology such that for any $\phi\in K[z]$, 
the function $\sA^1\ni\cS\mapsto[\phi]_{\cS}\in\bR_{\ge 0}$ is continuous,
and then $\sA^1$ is a locally compact, uniquely arcwise connected,
Hausdorff topological space. 
A subset $B$ in $K$ is called a {\em $K$-closed disk} if
\begin{gather*}
 B=B(a,r):=\{z\in K:|z-a|\le r\} 
\end{gather*}
for some $a\in K$ and some $r\ge 0$. 
For any $K$-closed disks $B,B'$, {\em if $B\cap B'\neq\emptyset$, then 
either $B\subset B'$ or $B'\subset B$}
(by the strong triangle inequality). 
The Berkovich representation \cite{Berkovichbook} asserts that
{\em any element $\cS\in\sA^1$ is 
induced by a non-increasing and nesting sequence $(B_n)$ of
$K$-closed disks in that
$[\phi]_{\cS}=\inf_{n\in\bN}\sup_{z\in B_n}|\phi(z)|$
for any $\phi\in K[z]$}; a point $\cS\in\sA^1$ is said to be of {\em type}
I, II, III, and IV 
if $\cS$ can be induced by a (constant sequence of a singleton 
$B(a,0)=\{a\}$ consisting of a unique) point $a\in\bA^1$, 
a (constant sequence of a) $K$-closed disk $B(a,r)$
satisfying $r\in|K^*|$, a (constant sequence of a) $K$-closed disk $B(a,r)$
satisfying $r\in\bR_{>0}\setminus|K^*|$, and any other case holds, respectively.
We identify, as a set, $K$ with
the set of all type I points in $\sA^1$.

Any $[\cdot]_{\cS}\in\sA^1$ extends to the function
$K(z)\to\bR_{\ge 0}\cup\{+\infty\}$ such that, 
for any $\phi=\phi_1/\phi_2\in K(z)$ where $\phi_1,\phi_2\in K[z]$ are coprime, 
$[\phi]_{\cS}=[\phi_1]_{\cS}/[\phi_2]_{\cS}\in\bR_{\ge 0}\cup\{+\infty\}$, 
and we also regard $\infty\in\bP^1$ as the function 
$[\cdot]_{\infty}:K(z)\to\bR_{\ge 0}\cup\{+\infty\}$ 
such that for every $\phi\in K(z)$,
$[\phi]_{\infty}=|\phi(\infty)|\in\bR_{\ge 0}\cup\{+\infty\}$.
As a set, the Berkovich projective line $\sP^1=\sP^1(K)$ 
is nothing but $\sA^1\cup\{\infty\}$. The point $\infty$ is
also said to be of type I. 

Set $\sH^1:=\sP^1\setminus\bP^1$, 
and let $\sH^1_{\mathrm{II}}$ (resp.\ $\sH^1_{\mathrm{III}}$)
be the set of all type II (resp.\ type III) points in $\sP^1$. 
The {\itshape Gauss $($or canonical$)$ point} 
\begin{gather*}
 \cS_{\can}\in\sH^1_{\mathrm{II}} 
\end{gather*}
is induced by the (constant sequence of the) $K$-closed disk
$\cO_K=B(0,1)$, that is, the ring of $K$-integers. Let $\cM_K$ be
the unique maximal ideal of $\cO_K$ and $k$ be the residue field
$\cO_K/\cM_K$ of $K$.

An ordering $\le_\infty$ on $\sP^1$ is defined
so that for any $\cS,\cS'\in\sP^1$, $\cS\le_\infty\cS'$
if and only if $[\cdot]_{\cS}\le_\infty[\cdot]_{\cS'}$ on $K[z]$.
For any $\cS,\cS'\in\sP^1$, if $\cS\le_{\infty}\cS'$, then set 
$[\cS,\cS']=[\cS',\cS]:=\{\cS''\in\sP^1:\cS\le_\infty\cS''\le_\infty\cS'\}$,
and in general, 
there is the unique point, say, $\cS\wedge_\infty\cS'\in\sP^1$ such that
$[\cS,\infty]\cap[\cS',\infty]=[\cS\wedge_\infty\cS',\infty]$, and set 
\begin{gather*}
 [\cS,\cS']:=[\cS,\cS\wedge_\infty\cS']\cup[\cS\wedge_\infty\cS',\cS'].
\end{gather*}
These {\em closed intervals} $[\cS,\cS']\subset\sP^1$ make $\sP^1$ 
an ``$\bR$-''tree in the sense of Jonsson \cite[Definition 2.2]{Jonsson15}.
For any $\cS\in\sP^1$, the equivalence class 
$T_{\cS}\sP^1:=(\sP^1\setminus\{\cS\})/\sim$ is defined so that
for any $\cS',\cS''\in\sP^1\setminus\{\cS\}$, 
$\cS'\sim\cS''$ if $[\cS,\cS']\cap[\cS,\cS'']=[\cS,\cS'\wedge_{\cS}\cS'']$ 
for some point say $\cS'\wedge_{\cS}\cS''\in\sP^1\setminus\{\cS\}$. 
An element of $T_{\cS}\sP^1$ is called a {\itshape direction} of $\sP^1$
at $\cS$ and denoted by $\Vv$, and also by $U(\Vv)=U_{\cS}(\Vv)$ as a
subset in $\sP^1\setminus\{\cS\}$. If $\Vv\in T_{\cS}\sP^1$ is 
represented by an element $\cS'\in\sP^1\setminus\{\cS\}$,
then we also write $\Vv$ as $\overrightarrow{\cS\cS'}$.
A point $\cS\in\sP^1$ is of type either I or IV if and only if
$\#T_{\cS}\sP^1=1$, that is, $\cS$ is an end point of $\sP^1$ as a tree. 
On the other hand, a point $\cS\in\sP^1$ is of type II (resp.\ type III)
if and only if $\#T_{\cS}\sP^1>2$ (resp.\ $=2$). 
A (Berkovich) {\em strict connected open affinoid} in $\sP^1$ is
a non-empty subset in $\sP^1$ which is
the intersection of some finitely many elements of 
$\{U(\Vv):\cS\in\sH^1_{\mathrm{II}},\Vv\in T_{\cS}\sP^1\}$.
The topology of $\sP^1$ 
has the quasi-open basis $\{U(\Vv):\cS\in\sP^1,\Vv\in T_{\cS}\sP^1\}$, 
so has the open basis consisting of all 
Berkovich strict connected open affinoids in $\sP^1$.
Both $\bP^1$ and $\sH^1_{\mathrm{II}}$ are dense in $\sP^1$,
the set $U(\Vv)$ is a component of $\sP^1\setminus\{\cS\}$
for each $\cS\in\sP^1$ and each $\Vv\in T_{\cS}\sP^1$,
and for any $\cS,\cS'\in\sP^1$,
the interval $[\cS,\cS']$ is the unique arc in $\sP^1$ between $\cS$ and $\cS'$.

We also denote the {\em left-half open} interval
$[\cS,\cS']\setminus\{\cS\}\subset\sP^1$ by $(\cS,\cS']$.
For every $0<r\le 1$, letting $\cS(r)\in(0,\cS_{\can}]$ be the point
induced by the (constant sequence of the) $K$-closed disk $B(0,r)$,
we have 
\begin{gather}
 \{z\in K:|z|=r\}=\bigcup_{\Vv\in T_{\cS(r)}\sP^1\setminus\{\overrightarrow{\cS(r)0},\overrightarrow{\cS(r)\infty}\}}(U(\Vv)\cap\bP^1).\label{eq:annulus}
\end{gather}
The normalized chordal metric $[z,w]_{\bP^1}$ on $\bP^1$ 
extends to an upper semicontinuous and separately continuous function
$(\cS,\cS')\mapsto[\cS,\cS']_{\can}$ on $\sP^1\times\sP^1$,
which is called the {\em generalized Hsia kernel} function on $\sP^1$
with respect to $\cS_{\can}$ (\cite[\S4.4]{BR10});
the function 
$\cS\mapsto[\cS,\cS]_{\can}$ is {\em continuous on any interval} in $\sP^1$,
and for every $\cS\in\sP^1$ and every $\Vv\in T_{\cS}\sP^1$, we have
\begin{gather}
\diam_{\#}\bigl(U(\Vv)\cap\bP^1\bigr)
=\begin{cases}
  [\cS_{\can},\cS_{\can}]_{\can}=1\quad\text{if }\cS=\cS_{\can},\\
  [\cS_{\can},\cS_{\can}]_{\can}=1\quad\text{if }\cS\neq\cS_{\can}
  \text{ and }\Vv=\overrightarrow{\cS\cS_{\can}}, \\
[\cS,\cS]_{\can}\quad\text{if }\cS\in(\sH^1_{\mathrm{II}}\setminus\{\cS_{\can}\})
\cup\sH^1_{\mathrm{III}}
\text{ and }\Vv\neq\overrightarrow{\cS\cS_{\can}}.
 \end{cases}
\label{eq:diameter}
\end{gather}

\subsection{Mapping properties of morphisms}
Any non-constant morphism $f$ from an open neighborhood of a point $\cS\in\sP^1$
to $\sP^1$ is finite to one {\em near $\cS$} and induces a {\em surjection} 
$f_*=(f_*)_{\cS}:T_{\cS}\sP^1\to T_{f(\cS)}\sP^1$,
which is called the {\em tangent map} of $f$ at $\cS$; 
when $f(\cS_{\can})=\cS_{\can}$,
$(f_*)_{\cS_{\can}}$ is regarded as the action on $\bP^1(k)$
of the reduction $\tilde{f}\in k(z)$ of $f$
(see, e.g., \cite[\S 2.6, \S 4.5]{Jonsson15} for the details).

Let $f:\overline{\sD}\setminus\{0\}\to\sP^1$ be a non-constant morphism.
Then from a general mapping property of a non-constant $K$-analytic mapping from 
a disk in $\bA^1$ to $\bP^1$ (see, e.g., \cite[\S 3]{Benedetto08}),
for every $\cS\in(0,\cS_{\can}]$ and every 
$\Vv\in T_{\cS}\sP^1\setminus\{\overrightarrow{\cS0},\overrightarrow{\cS\infty}\}$,
\begin{gather}
\text{if }f(U_{\cS}(\Vv))\neq\sP^1,\text{ then }
f(U_{\cS}(\Vv))= U_{f(\cS)}(f_*\Vv).
\label{eq:mapping}
\end{gather}

\section{Proofs of Theorems \ref{th:LV} and \ref{th:big}}\label{sec:LV}

With no loss of generality, we also assume that 
$K$ is algebraically closed and $|\cdot|$ is non-trivial.
We equip $\bP^1$ with the normalized chordal metric $[z,w]_{\bP^1}$
defined in Section \ref{sec:background},
so $\diam_\#(\bP^1)=1$.

\begin{proof}[Proof of Theorem \ref{th:LV}]
 Let $f:\overline{\sD}\setminus\{0\}\to\sP^1$ be a morphism, and
 suppose that $f$ does not satisfy \eqref{eq:LV}.
 Then by \eqref{eq:annulus}, 
 for every $\cS\in(0,\cS_{\can}]$ close enough to $0$ and every 
 $\Vv\in T_{\cS}\sP^1\setminus\bigl\{\overrightarrow{\cS 0},\overrightarrow{\cS\infty}\bigr\}$, we have $f(U(\Vv))\neq\sP^1$, 
 and in turn by \eqref{eq:diameter},
 \eqref{eq:mapping}, and the continuity of $f$,
 there is $\Vu_0\in T_{\cS_{\can}}\sP^1$ such that $f(\cS)\subset U(\Vu_0)$
 for every $\cS\in(0,\cS_{\can}]$ close enough to $0$.
 Then, under the assumption that $f$ does not satisfy \eqref{eq:LV}, 
 by \eqref{eq:diameter} and \eqref{eq:mapping},
 for any $\cS\in(0,\cS_{\can}]$ close enough to $0$, we even have
 \begin{gather*}
  f\bigl(\sP^1\setminus(U(\overrightarrow{\cS 0})\cup U(\overrightarrow{\cS\infty}))\bigr)
\subset U(\Vu_0),
 \end{gather*} 
 and then by \eqref{eq:annulus} and a Riemann-type extension theorem
 (Proposition \ref{th:Riemann}),
 $f$ extends to a morphism from $\sD$ to a $K$-analytic space.
\end{proof}

\begin{proof}[Proof of Theorem \ref{th:big}]
We can assume $N=1$ by an argument similar to that in the final paragraph in 
\cite[Proof of Proposition 7.17]{Rodriguez16} 
involving not only the existence of a nice lifting of $f|(\cO_K\setminus\{0\})$
to a morphism $\cO_K\setminus\{0\}\to\bA^{N+1}$ through
the canonical projection $\pi:\bA^{N+1}\setminus\{(0,\ldots,0)\}\to\bP^N$
(using \cite[Theorem 2.7.6]{FvdP04}) but also
the projection $\bA^{N+1}\setminus\{z_0=0\}\mapsto[z_0:z_1]\in\bP^1$.

Let $\Omega$ be an open subset in $\sP^1$.
Suppose that $\operatorname{Mor}(\overline{\sD}\setminus\{0\},\Omega)$ is normal
and, to the contrary, that
there is a morphism $f:\overline{\sD}\setminus\{0\}\to\Omega$
having an isolated essential singularity at the origin.

 {\bfseries (i).}
 If there is a point $\cS_0\in\sH^1_{\mathrm{II}}$
 such that $\#(f^{-1}(\cS_0)\cap(0,\cS_{\can}])=\infty$,
 then we can conclude a contradiction
 by an argument similar to that in the former half of 
 \cite[Proof of Theorem 7.17]{Rodriguez16}.
 For completeness, we include the argument; by the surjectivity
 of $f_*:T_{\cS}\sP^1\to T_{\cS_0}\sP^1$ for every 
 $\cS\in f^{-1}(\cS_0)\cap(0,\cS_{\can}]$ (and by $\#T_{\cS_0}\sP^1=\infty>2$),
 there are a sequence $(\cS_n)$ in $f^{-1}(\cS_0)\cap(0,\cS_{\can}]$  
 tending to $0$ as $n\to\infty$ and
 a direction $\Vu_0\in T_{\cS_0}\sP^1$ such that for every $n\in\bN$, 
 $\Vu_0\in f_*(T_{\cS_n}\sP^1\setminus\bigl\{\overrightarrow{\cS_n0},
 \overrightarrow{\cS_n\infty}\bigr\})$.
 Then fixing a point $a_0\in U(\Vu_0)\cap\bP^1$, for every $n\in\bN$,
 there is a point $b_n\in\bP^1\cap f^{-1}(a_0)$ such that
 $\overrightarrow{\cS_nb_n}\in T_{\cS_n}\sP^1\setminus
 \bigl\{\overrightarrow{\cS_n0},\overrightarrow{\cS_n\infty}\bigr\}$.
For every $n\in\bN$, by \eqref{eq:annulus}, $\cS_n$ is induced by the
 (constant sequence of the) $K$-closed disk $B(0,|b_n|)$. 

 Now setting 
\begin{gather*}
  g_n(z):=f(b_{n!}\cdot z^{n!})\in\operatorname{Mor}(\overline{\sD}\setminus\{0\},\Omega) 
\end{gather*}
for each $n\in\bN$, under the assumption that
 $\operatorname{Mor}(\overline{\sD}\setminus\{0\},\Omega)$ is {\em normal}
 in the sense of \cite[Introduction]{FKT12},
 taking a subsequence of $(g_n)$ if necessary, 
 the ({\em pointwise}) limit $g:=\lim_{n\to\infty}g_n$ on 
 $\overline{\sD}\setminus\{0\}$
 exists and is a {\em continuous} mapping $\overline{\sD}\setminus\{0\}\to\sP^1$.
 Then 
\begin{gather*}
  g(\cS_{\can})=\lim_{n\to\infty}g_n(\cS_{\can})
 =\lim_{n\to\infty}f(\cS_{n!})=\cS_0.
\end{gather*}
On the other hand, we can fix a sequence $(\zeta_m)$ in $K$ such that
$(\zeta_m)^m=1$ (so $\zeta_m\in\overline{\sD}\setminus\{0\}$)
for every $m\in\bN$ and that
 $\lim_{m\to\infty}\zeta_m=\cS_{\can}$. Then for any $m\in\bN$,
\begin{gather*}
  g(\zeta_m)=\lim_{n\to\infty}g_n(\zeta_m)
 =\lim_{n\to\infty}f(b_{n!}\cdot 1^{n!/m})=\lim_{n\to\infty}f(b_{n!})
=a_0,\quad\text{and in turn}\\
 g(\cS_{\can})=\lim_{m\to\infty}g(\zeta_m)=\lim_{m\to\infty}a_0=a_0.
\end{gather*} 
Hence 
we must have 
$\bP^1\ni a_0=\cS_0\in\sH^1_{\mathrm{II}}$,
which is a contradiction.

 {\bfseries (ii).}
 If there is a sequence $(\cS_n)$ in $(0,\cS_{\can}]$ tending to $0$
 as $n\to\infty$ such that for every $n\in\bN$, 
 there is a direction $\Vv_n\in T_{\cS_n}\sP^1\setminus
 \bigl\{\overrightarrow{\cS_n0},\overrightarrow{\cS_n\infty}\bigr\}$
 satisfying $f(U(\Vv_n))=\sP^1$, then 
 taking a subsequence of $(\cS_n)$ if necessary,
 there is a direction $\Vu_0\in T_{\cS_{\can}}\sP^1$ such that
 $f(\cS_n)\in\sP^1\setminus U(\Vu_0)$ for any $n\in\bN$.
 Fixing a point $a_0\in\bP^1\cap U(\Vu_0)$,
 there is a point $b_n\in\bP^1\cap U(\Vv_n)\cap f^{-1}(a_0)$
 for each $n\in\bN$.

 Now by a (branched) rescaling argument similar to that in the case (i),
 we must have $U(\Vu_0)\ni a_0\in\overline{\{f(\cS_n):n\in\bN\}}\subset\sP^1\setminus U(\Vu_0)$. 
 This is a contradiction. 

 {\bfseries (iii).}
 Suppose finally that for every $\cS\in\sH^1_{\mathrm{II}}$,
 $\#(f^{-1}(\cS)\cap(0,\cS_{\can}])<\infty$ and that 
 for every $\cS\in(0,\cS_{\can}]$ close enough to $0$
 and every $\Vv\in T_{\cS}\sP^1\setminus\{\overrightarrow{\cS0},\overrightarrow{\cS\infty}\}$, $f(U(\Vv))\neq\sP^1$.
 
 Then under the former assumption (for $\cS=\cS_{\can}$), by the continuity of $f$, 
 there is $\Vu_0\in T_{\cS_{\can}}\sP^1$
 such that $f(\cS)\subset U(\Vu_0)$
 for every $\cS\in(0,\cS_{\can}]$ close enough to $0$.
 Then under the latter assumption, by \eqref{eq:annulus},
 \eqref{eq:mapping}, and
 a Riemann-type extension theorem (Proposition \ref{th:Riemann}), 
 there must exist a sequence 
 $(\cS_n)$ in $(0,\cS_{\can}]$ tending to $0$ as $n\to\infty$ such that 
 for every $n\in\bN$, $f(\cS_n)\in U(\Vu_0)$ and
there is a direction 
$\Vv_n\in T_{\cS_n}\sP^1\setminus\bigl\{\overrightarrow{\cS_n0},\overrightarrow{\cS_n\infty}\bigr\}$ satisfying
\begin{gather*}
 f_*(\Vv_n)=\overrightarrow{f(\cS_n)\cS_{\can}},
\end{gather*} 
and then $f(U(\Vv_n))\supset\sP^1\setminus U(\Vu_0)$.
Fixing a point $a_0\in\bP^1\setminus U(\Vu_0)=\bP^1\setminus\overline{U(\Vu_0)}$,
 there is a point $b_n\in\bP^1\cap U(\Vv_n)\cap f^{-1}(a_0)$
 for each $n\in\bN$.

 Now by a (branched) rescaling argument similar to that in the case (i),
 we have $\bP^1\setminus\overline{U(\Vu_0)}\ni a_0\in\overline{\{f(\cS_n):n\in\bN\}}\subset\overline{U(\Vu_0)}$. 
 This is a contradiction. 
\end{proof}

\begin{acknowledgement}
The author thanks the referee for a very careful scrutiny and invaluable comments. This research was partially supported by JSPS Grant-in-Aid for Scientific Research (C), 15K04924.
The author also thank
the Research Institute for Mathematical Sciences, an International Joint Usage/Research Center located in Kyoto University.
\end{acknowledgement}


\def\cprime{$'$}

\end{document}